
\documentclass[10pt]{amsart}
\usepackage{amssymb}
\usepackage{graphicx}
\usepackage{amsmath}

\usepackage{hyperref}
\hypersetup{
	pdfstartview={XYZ null null 1.00}, 
	pdfpagemode=UseNone, 
	colorlinks,
	breaklinks, 
	linkcolor=blue,
	urlcolor=blue, 
	citecolor=blue
}


\newtheorem{theorem}{Theorem}[section]

\newtheorem{lemma}[theorem]{Lemma}
\newtheorem{prop}[theorem]{Proposition}
\newtheorem{cor}[theorem]{Corollary}
\newtheorem{conj}[theorem]{Conjecture}
\newtheorem{claim}[theorem]{Claim}

\theoremstyle{definition}
\newtheorem{definition}[theorem]{Definition}

\newcommand{\zz}{(\mathbb{Z}\slash2\mathbb{Z}) *(\mathbb{Z}\slash2\mathbb{Z})}
\newcommand{\e}{\mathrm{Ends}}

\title{The uniform Gardner conjecture and rounding Borel flows}
\author{Matthew Bowen}

\address{Department of Mathematics and Statistics, McGill University, 805 Sherbrooke St W., H3A 0B9  Montreal, Canada}

\email{ matthew.bowen2@mail.mcgill.ca}

\author{G\'abor Kun}

\address{Alfr\'ed R\'enyi Institute of Mathematics, H-1053 Budapest, Re\'altanoda u. 13-15., Hungary}
\address{Institute of Mathematics, E\"otv\"os L\'or\'and University, P\'azm\'any P\'eter s\'et\'any 1/c, H-1117 Budapest, Hungary}

\email{kungabor@renyi.hu}

\author{Marcin Sabok}

\address{Department of Mathematics and Statistics, McGill University, 805 Sherbrooke St W., H3A 0B9  Montreal, Canada}
\email{marcin.sabok@mcgill.ca}

\thanks{The first and the third authors are partly funded by the NSERC Discovery Grant  RGPIN-2020-05445, NSERC Discovery Accelerator
Supplement  RGPAS-2020-00097 and NCN Grant Harmonia  2018/30/M/ST1/00668. The second author's work on the project leading to this application has received funding from the European Research Council (ERC) under the European Union's Horizon 2020 research and innovation programme (grant agreement No. 741420), from the \'UNKP-20-5 New National Excellence Program of the Ministry of Innovation and Technology from the source of the National Research, Development and Innovation Fund, from the J\'anos Bolyai Scholarship of the Hungarian Academy of Sciences and from Lend\"ulet grant no. 2022-58.}


\begin{document}

\maketitle

\begin{abstract}
We study groups which satisfy Gardner's equidecomposition conjecture for uniformly distributed sets. 
We prove that an amenable group has this property 
if and only if it does not admit $\zz$ as a quotient by a finite subgroup. Our technical contribution is an algorithm for rounding Borel flows for actions of amenable groups.
\end{abstract}

\section{Introduction}


Let $\Gamma$ be a group acting on a standard probability space $X$. Two sets $A, B \subseteq X$ are $\Gamma$-{\it equidecomposable} if there are finite partitions $A=\bigcup_{i=1}^k A_{i}, B=\bigcup_{i=1}^k B_{i}$ and group elements $\gamma_1, \dots ,\gamma_k \in \Gamma$ such that $A_i=\gamma_i B_i$ holds for every $1 \leq i \leq k$. An equidecomposition is {\it measurable} if the sets in the partitions are measurable. We recommend M\'ath\'e  \cite{MatheICM} as a survey on measurable equidecompositions.

The most famous open question on equidecompositions was Tarski's circle squaring problem \cite{Tarski.problem}. Laczkovich solved this positively \cite{lacz11}, but his construction was not measurable, which left the measurable circle squaring problem open for decades.
This was only recently solved by Grabowski, M\'ath\'e and Pikhurko \cite{gmp}, who found a measurable equidecomposition \cite{gmp}, and Marks and Unger \cite{marks-unger} found a Borel equidecomposition. 
The measurable circle squaring also can be deduced from the result of the authors \cite{bowen2021perfect} on the existence of measurable perfect matchings in hyperfinite bipartite graphings.
Recently, M\'ath\'e, Noel and Pikhurko \cite{mathe2022circle} managed to find a circle squaring with Jordan measurable pieces which are Boolean combinations of $F_{\sigma}$ sets. 

The Banach-Tarski paradox \cite{banach-tarski} shows that any two bounded subsets of $\mathbb{R}^3$ with nonempty interior are equidecomposable. 
Von Neumann \cite{vonneumann} introduced amenable groups in order to explain this paradox.
This is the largest class of groups where the existence of an equidecomposition may imply the existence of a measurable equidecomposition. 
Gardner made the following conjecture. 

\begin{conj} \cite{gardner}
Consider two bounded measurable sets $A, B \subseteq \mathbb{R}^n$ and an {amenable group} of isometries $\Gamma$. If $A$ and $B$ are $\Gamma$-equidecomposable then they admit a measurable equidecomposition.
\end{conj}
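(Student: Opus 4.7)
The plan is to translate Gardner's conjecture into a Borel perfect matching problem and attack it by first producing a measurable fractional matching and then integralising it via the paper's rounding algorithm for Borel flows.

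Let $\gamma_1,\ldots,\gamma_k\in\Gamma$ be the isometries witnessing the abstract equidecomposition and set $\Gamma_0=\langle\gamma_1,\ldots,\gamma_k\rangle\leq\Gamma$, an amenable, finitely generated subgroup acting on $\mathbb{R}^n$ by measure-preserving isometries. Form the Borel bipartite graph $G$ on $A\sqcup B$ with edges $\{x,\gamma_i x\}$; a Borel perfect matching whose edges are labelled by their defining isometries is exactly a measurable $\Gamma$-equidecomposition. Because $\Gamma_0$ is amenable, its orbit equivalence relation on $A\cup B$ is hyperfinite and, restricted to the relevant Borel set, admits measurable Følner tilings.

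I would next produce a Borel fractional perfect matching $f:E(G)\to[0,1]$ satisfying $\sum_{e\ni v}f(e)=1$ almost everywhere. The abstract equidecomposition supplies an integer matching inside every sufficiently invariant Følner tile, up to boundary errors that vanish in the Følner limit. Averaging such tile-wise matchings along a Borel Følner sequence yields $f$, which is equivalently a Borel real-valued flow on $G$ with divergence $\mathbf{1}_B-\mathbf{1}_A$.

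Finally, I would feed $f$ into the Borel flow-rounding algorithm to obtain a Borel $\{0,1\}$-valued flow---a Borel perfect matching in $G$---and hence the desired measurable equidecomposition. The main obstacle is this rounding step. Classical rounding uses globally coordinated augmenting paths, whereas a Borel implementation must absorb all corrections inside Følner tiles while keeping the $\ell^\infty$-norm of the running error bounded through iterations. This is precisely the algorithmic contribution announced in the abstract, and its applicability to the full statement is conditional on the absence of the dihedral quotient obstruction also announced there, so the proposal above should be expected to yield the conjecture only for those amenable isometry groups not admitting $\zz$ as a quotient by a finite subgroup, with the uniform case providing the cleanest setting in which the rounding actually closes.
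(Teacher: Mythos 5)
This statement is a \emph{conjecture}, not a theorem of the paper: Gardner's conjecture remains open, and the paper explicitly positions itself as proving only a related, partial result (the ``uniform Gardner property'' for groups avoiding a dihedral quotient). So there is no proof of the stated conjecture in the paper for your sketch to be compared against, and a complete proof of the conjecture would have been the headline result rather than an ingredient. You implicitly acknowledge this at the end, but the opening should have flagged it directly: no amount of flow-rounding will establish the conjecture as stated, because the paper's own Lemma \ref{zz gard} exhibits $\Gamma$-equidecomposable uniform sets with no measurable $\Gamma$-equidecomposition whenever $\Gamma$ has a $\zz$ quotient, and the conjecture (which allows the measurable equidecomposition to use arbitrary isometries, not only elements of $\Gamma$) is genuinely open even for such groups.

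There is also a concrete gap in the step where you average tile-wise integer matchings along a F\o lner sequence to obtain the fractional matching $f$. The boundary error per tile is controlled by $|\partial T|$, but the \emph{relative} error at a vertex of $A\cap T$ or $B\cap T$ is $|\partial T|/|A\cap T|$, which need not vanish unless $|A\cap T|$ and $|B\cap T|$ are bounded below proportionally to $|T|$. This is exactly the $\Gamma$-uniformity hypothesis, and without it the averaging can fail to produce a flow with the correct divergence almost everywhere. The paper handles this by assuming uniformity throughout (Definition of $\Gamma$-uniform, Proposition \ref{one-ended.wg}), and by using Proposition \ref{meas. flow} to pass from a non-measurable real flow to a measurable one before any rounding, rather than building the fractional matching by F\o lner averaging. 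So your sketch is a reasonable description of the overall architecture of Theorem \ref{weak gardner}'s positive direction, but it silently borrows the uniformity hypothesis that the conjecture itself does not supply, and it conflates measurable equidecomposition (as in the conjecture) with measurable $\Gamma$-equidecomposition (as in the paper's theorem). Stating these two restrictions up front, rather than as a closing caveat, would have made the limits of the approach honest.
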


Gardner's conjecture would directly imply measurable circle squaring by the resulf of Laczkovich \cite{lacz11}.
Note that Gardner's conjecture does not require that the isometries in the measurable equidecomposition are in $\Gamma$. The second author has found an example of an amenable group of isometries $\Gamma$ and two measurable sets admitting a $\Gamma$-equidecomposition but no measurable $\Gamma$-equidecomposition; the example is a modification of Laczkovich's construction \cite{lacz3}. This example shows that in general the group of isometries admitting an equidecomposition may not admit a measurable equidecomposition. However, in this note we prove that this holds for sufficiently equidistributed sets (we call them $\Gamma$-uniform, motivated by \cite{lacz.uniform}); the only counterexamples are the groups admitting the infinite dihedral group (i.e., $\zz$) as a quotient by a finite subgroup.


Recall that given a Borel graph $G$ and a set $F\subseteq V(G),$
the \textit{boundary} of $F$ is $\partial F:= \{x\in F: \exists y\in V(G)\setminus F \textnormal{ with } (x,y) \in E(G) \}.$  A set $F$ is $\varepsilon$\textit{-F\o lner} if $|\partial F|< \varepsilon |F|$ for every finite $F \subset V(G)$.

\begin{definition}
Given a free pmp action of a finitely generated amenable group $\Gamma\curvearrowright
(X,\mu)$ and
$A\subseteq X$, we say that $A$ is \textit{$\Gamma$-uniform} if there exists 
$\varepsilon>0$ such that for every $\varepsilon$-F\o lner set $F$ contained in an orbit of the action, $|A\cap F|\geq \varepsilon|F|$ holds.
\end{definition}

Note that in the above definition, the property of being $\Gamma$-uniform does not depend on the choice of the generating set of the group $\Gamma$, but the constant $\varepsilon$ might.
Motivated by Gardner's conjecture we consider the following property.

\begin{definition}
An amenable group $\Gamma$ satisfies the \textit{uniform Gardner property} if whenever $\Gamma\curvearrowright
(X,\mu)$ is a free pmp action,
$A,B\subseteq X,$ and $A,B$ are \textit{$\Gamma$-uniform} and $\Gamma$-equidecomposable, then $A$ and $B$ admit a measurable $\Gamma$-equidecomposition.
\end{definition}

Now we state our main result. 

\begin{theorem}\label{weak gardner}
  Let $\Gamma$ be a finitely generated amenable group. $\Gamma$ satisfies the uniform Gardner property if and only if it does
  not admit $\zz$ as a quotient by a
    finite subgroup.
\end{theorem}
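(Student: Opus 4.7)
The plan is to translate $\Gamma$-equidecomposition into a measurable integer flow problem on the orbit graphing and attack it in two stages: first produce a measurable real-valued (fractional) flow, then round it to an integer flow in a Borel manner. Given $\Gamma$-uniform and $\Gamma$-equidecomposable $A, B \subseteq X$, an abstract equidecomposition encodes, on each orbit, an integer transport from $A$ to $B$ using a fixed finite set $S \subseteq \Gamma$. My first step will be to construct a measurable real-valued transport with the correct marginals, i.e.\ a Borel function $f \colon S \times X \to \mathbb{R}_{\geq 0}$ whose row and column sums equal $\mathbf{1}_A$ and $\mathbf{1}_B$; this is a measurable Hall/max-flow argument on the Schreier graphing of the action, where the $\Gamma$-uniformity hypothesis supplies exactly the Følner-type feasibility inequalities needed (every would-be $\varepsilon$-F\o lner witness to infeasibility has balanced $A$-mass and $B$-mass up to a controlled boundary error).

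The technical heart of the proof, and the part I expect to be the main obstacle, is the \emph{Borel rounding} of the fractional transport to an integer transport. I would proceed by a Marks--Unger style iterative scheme along a Borel Følner exhaustion of the orbit graph: round the flow on each Følner piece, push the fractional residues to its boundary, and absorb them with short correction paths on the next scale. For $\mathbb{Z}^d$-actions I would implement this by toric ``cube'' cancellations; the general amenable case should then reduce to the $\mathbb{Z}^d$-like case after selecting a Borel skeleton of the orbit using a quasi-tiling. The obstruction to this scheme is a bi-infinite \emph{undirected} (that is, reflection-symmetric) end in a positive-measure set of orbits: along such an end, a half-integer residue survives every finite correction because there is no Borel way to choose a consistent direction in which to push it to infinity. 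For finitely generated amenable $\Gamma$, such an end appears in the orbit structure precisely when $\Gamma$ admits $\zz$ as a quotient by a finite normal subgroup, since $\zz$ is the unique two-ended group whose Cayley graph carries an orientation-reversing automorphism; the quotient hypothesis is exactly what ensures that my Borel skeleton can be oriented, making the iterative rounding converge.

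For the necessity direction, I would suppose $N \trianglelefteq \Gamma$ is finite with $\Gamma/N \cong \zz$ and lift the counterexample of \cite{lacz3} through the quotient map. The plan is to take a free pmp $\Gamma$-action whose $N$-fibers collapse to a $\zz$-action, and to choose $A$ and $B$ to be ``arithmetic-progression-like'' along the bi-infinite $\zz$-orbit direction: this makes $\Gamma$-uniformity automatic, while still allowing an abstract (non-measurable) $\Gamma$-equidecomposition. Any hypothetical measurable $\Gamma$-equidecomposition would then yield a measurable transport whose net flux across $\mu$-almost every reflection axis of the $\zz$-quotient is a fixed non-zero half-integer, which is ruled out by ergodicity together with the reflection-invariance forced by $\zz$. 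Once the rounding algorithm of the middle paragraph is in place, this construction and the translation between equidecomposition and flows follow standard paradigms from measurable equidecomposition and Borel combinatorics.
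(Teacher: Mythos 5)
Your high-level plan---translate equidecomposition to flows, round them Borel-ly, and identify $\zz$ as the obstruction via the impossibility of a Borel choice of direction along a two-ended orbit---matches the paper's. But the central technical step is underdeveloped in a way that matters. You propose to round by ``push[ing] the fractional residues to its boundary, and absorb[ing] them with short correction paths on the next scale,'' and to handle general amenable groups by reducing to a $\mathbb{Z}^d$-like skeleton. This scheme, as described, does not obviously converge: exporting residues to boundaries of F\o lner pieces and correcting at the next scale can accumulate error across scales. The paper's actual device is the \emph{connected toast} (Definition~\ref{toast}): a nested Borel tiling $\mathcal{T}$ such that for each tile $K$ the set $K\setminus\bigcup_{L\subsetneq K} L$ is connected. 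That connectedness is what lets one route correction \emph{cycles} through a tile's own skin and fix the flow to be dyadic, then integral, inside each tile \emph{without} exporting any residue to later scales---a frozen, leave-no-trace rounding that yields the uniform bound $|\phi-\psi|<3$ (Theorem~\ref{borel}, via Lemma~\ref{subgraph}). This also makes the $\mathbb{Z}^d$ reduction unnecessary, since connected toasts exist a.e.\ in every one-ended hyperfinite graphing (Proposition~\ref{tiling}).

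Two smaller misplacements. First, $\Gamma$-uniformity does not supply the feasibility for the initial fractional flow: a bounded measurable real-valued $(\chi_A-\chi_B)$-flow already exists by a weak-limit argument from the abstract equidecomposition and hyperfiniteness (Proposition~\ref{meas. flow}). Uniformity is used at the very end, when one tiles by $\tfrac{c}{4}$-F\o lner sets and needs each tile to contain $\geq c|T|$ points of $A$ and of $B$ so it can absorb the rounded inter-tile transport and still admit an internal bijection (Proposition~\ref{one-ended.wg}). Second, in the necessity direction, the ``non-zero half-integer flux across a reflection axis'' cannot be the obstruction---integral flows have integral flux. The paper's obstruction is qualitative: from a maximal independent set $A'$ in the $\zz$-Schreier graph and a transversal $B'$ of its gaps, a measurable equidecomposition would give a measurable perfect matching $A'\leftrightarrow B'$, hence a measurable $\zz$-invariant choice of end, which is ruled out in a totally ergodic action (Lemma~\ref{totally.ergodic}, Lemma~\ref{zz gard}).
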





Marks and Unger \cite{marks-unger} 
used a bounded integral-valued Borel flow in the Schreier graph induced by finitely many translations
to find an equidecomposition of equidistributed sets. Our main technical contribution is a simple Borel algorithm for rounding flows that 
applies beyond the case of graphs arising from $\mathbb{Z}^d$ actions. It relies on the notion of connected toasts (see Definition \ref{toast}), which were introduced by the authors in \cite{bowen2021perfect}.  They are known to exist a.e. in all one-ended hyperfinite graphings, as well as in Borel actions of $\mathbb{Z}^d$. By a recent result of the first author, Poulin and Zomback \cite{bowen2022one} they also exist in Borel actions of polynomial growth groups. 

\begin{theorem}\label{borel}
Consider a locally finite Borel graph $G$, a bounded integral Borel function
$f: V(G) \mapsto \mathbb{Z}$ and a Borel $f$-flow $\phi$.
Assume that $G$ admits a Borel connected toast.
Then {$G$ has} an integral Borel $f$-flow $\psi$ such that $|\phi-\psi| < 3$ {holds everywhere}.
\end{theorem}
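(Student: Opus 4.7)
The plan is to reduce to a Borel integer-correction problem and then solve it inductively using the Borel connected toast.

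First I would fix a Borel orientation of $E(G)$ and set $\psi_0(e) := \lfloor \phi(e) \rfloor$ on positively oriented edges, extending anti-symmetrically. Then $\psi_0$ is a Borel integer function on edges with $|\phi - \psi_0| < 1$ pointwise, though in general it is not an $f$-flow; its integer divergence defect is
\[
\epsilon(v) := f(v) - \operatorname{div}(\psi_0)(v) \in \mathbb{Z}.
\]
It then suffices to produce a Borel integer function $\delta$ on the edges with $\operatorname{div}(\delta) = \epsilon$ and $|\delta| \leq 2$: setting $\psi := \psi_0 + \delta$ yields a Borel integer $f$-flow with $|\phi - \psi| < 3$. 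A useful identity, obtained by summing divergences, is that for every finite $S \subseteq V(G)$,
\[
\Bigl|\sum_{v \in S} \epsilon(v)\Bigr| = \bigl|\text{net }(\phi - \psi_0)\text{-flow out of } S\bigr| < |\partial_E S|,
\]
and this value is an integer. Applied to each toast piece $T \in \mathcal{T}$, this keeps the aggregate defect inside $T$ small compared to its edge boundary.

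To construct $\delta$, I would enumerate the toast Borelly so that every piece comes after all strictly smaller pieces it contains, and process them in that order. When handling $T$, assign integer values of $\delta$ to the ``fresh'' edges $E_T$ lying in $T$ but in no smaller piece of $\mathcal{T}$, arranging the assignment so that $\operatorname{div}(\delta)(v) = \epsilon(v)$ is enforced at every vertex $v$ whose closed neighborhood $N[v]$ is first enclosed by $T$. The remaining constraints at vertices in $\partial T$ are deferred and are settled at a strictly larger piece, which is guaranteed to exist by the connected toast property. At each step the task reduces to a finite flow-rounding problem: round the real flow $(\phi - \psi_0)|_{E_T}$, which has edgewise magnitude $<1$, to an integer flow on $E_T$ matching the prescribed integer divergence targets at the newly interior vertices, with $\partial T$-vertices serving as free sources and sinks. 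By total unimodularity of the vertex--edge incidence matrix, the relevant flow polytope has integer extreme points, and a canonical Borel selector yields a specific $\delta|_{E_T}$ with $|\delta| \leq 2$.

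The main technical difficulty is in establishing the per-edge bound $|\delta|\leq 2$ uniformly across toast pieces of all sizes, independent of the errors accumulated when processing smaller pieces inside $T$. The connected toast structure is essential here: each edge of $G$ belongs to the fresh layer $E_T$ of exactly one piece, so $\delta$ is assigned once and never revised, and since $\partial T$ is contained in a strictly larger toast piece, boundary defects do not accumulate arbitrarily but are transferred cleanly outward. The defect-sum identity from the first paragraph supplies the feasibility needed for the $\pm 2$ integer rounding, and the pointwise bound $|\phi - \psi_0| < 1$ keeps each single rounding step within the required window. Borelness of $\delta$ follows from the Borelness of the toast, of the chosen edge orientation, and of the per-piece integer selector.
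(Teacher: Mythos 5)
Your high-level strategy is genuinely different from the paper's. The paper first converts $\phi$ into a \emph{dyadic} flow within error $<1$ (by adding small circuits supported on a toast piece $K$ and the connected fresh set $K\setminus\bigcup\mathcal{M}_k$), and then progressively halves denominators via Lemma~2.3, so that each edge is modified at exactly two toast levels and each modification is a convergent geometric series $\sum 2^{-l}<1$; the bound $<3$ is the sum $1+2$ of these per-edge budgets. You instead floor-round to $\psi_0$ in one shot and then try to patch the resulting integer divergence defect $\epsilon$ with a single bounded Borel integral correction $\delta$.

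The gap is in the claimed bound $|\delta|\leq 2$, and I do not think your scheme, as described, yields \emph{any} uniform edge bound. When you process a toast piece $L$ and treat $\partial L$ as free sources and sinks, the divergence of $\delta|_{E_L}$ at a boundary vertex $v$ is only constrained by $\lvert\mathrm{div}(\delta|_{E_L})(v)\rvert\leq 2\deg_L(v)$, which grows with the interior degree of $v$; the defect-sum identity $\lvert\sum_{S}\epsilon\rvert<|\partial_E S|$ controls aggregate defects of $\epsilon$, not the residual demand $\epsilon(v)-\mathrm{div}(\delta|_{\text{older}})(v)$ that is left at $v$ after the free rounding at $L$. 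Now $v$ becomes an interior vertex of the enclosing piece $T$, where the outstanding demand at $v$ must be met entirely by the fresh edges of $T$ incident to $v$; in a locally finite graph there may be only one such fresh edge. With a box constraint $|\delta|\leq 2$ the per-piece LP at $T$ is then simply infeasible, and total unimodularity cannot help you, since it only converts feasibility into integral feasibility. Your phrase ``boundary defects \ldots are transferred cleanly outward'' is exactly the assertion that needs proof, and the toast structure alone does not give it: the connectedness property~(3) is used in the paper to thread cycles through $K\setminus\bigcup\mathcal{M}_k$, not to bound deferred demands. To repair the argument you would need the rounding at each $L$ to be coordinated so that the residual demand at every boundary vertex stays $O(1)$ relative to its fresh degree, which is essentially the bookkeeping the paper's dyadic cascade carries out implicitly.
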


In particular, this implies that every locally finite one-ended hyperfinite graphing that admits a (not necessarily measurable) bounded real valued $f$-flow also admits a bounded integral measurable $f$-flow.  

\section{Definitions, notations}

{In this paper we will work with both \textit{Borel graphs}, i.e., graphs whose vertex set is a Polish space $X$ and whose edge set is a Borel subset of $X^2,$ and with \textit{graphings,} which are probability measure preserving Borel graphs.} We use the standard graph theoretical notation and refer to $X$ as $V(G)$ and denote the set of edges by $E(G)$. 

A locally finite connected graph is one-ended if after the removal of any finite subset of vertices it has exactly one infinite connected component. A locally finite infinite connected graph is two-ended if it is not one-ended and after the removal of any finite subset of vertices it has at most two infinite connected components. For every amenable group, its Cayley graph is either one-ended or two-ended, and the latter is true if and only if the group is virtually $\mathbb{Z}$. We write Ends($\Gamma$) for the set of ends of the group $\Gamma$.

We consider flows only on locally finite directed {Borel graphs} (to avoid imposing conditions such as absolute convergence of the sum of the flow at vertices of infinite degree). Given a locally finite directed Borel graph, a function $f:V(G)\rightarrow \mathbb{Z}$ and a function $c:E(G)\rightarrow \mathbb{N},$ we say that a function $\phi:E(G)\rightarrow \mathbb{R}$ is an \textit{$f$-flow bounded by $c$}  if $\phi(x,y)\leq c(x,y)$ for each  edge $(x,y)\in E(G)$, $\phi(x,y)=-\phi(y,x)$ and $\sum_{y\in N(x)}\phi(x,y)= f(x)$ for every  $x\in V(G)$.

For hyperfinite graphings the existence of a real valued measurable $f$-flow a.e. is equivalent to the existence of a not necessarily measurable $f$-flow, and this fact is implicit in the work of \cite{wehrung}, \cite{laczkovich.dec} and \cite{ciesla.sabok}.  

\begin{prop}\label{meas. flow}
Let $G$ be a locally finite hyperfinite graphing, $f:V(G)\rightarrow \mathbb{Z}$ a measurable function
and $\phi$ an $f$-flow bounded by the integral capacity $c$.  Then $G$ admits a measurable $f$-flow $\psi$ bounded by $c$. \end{prop}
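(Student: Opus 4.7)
The plan is to first extract a Gale--Hoffman-style cut condition from the existence of the (non-measurable) flow $\phi$, then use hyperfiniteness plus the finite Max-Flow Min-Cut theorem together with a Borel selection argument to produce a measurable flow.

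First I would derive the cut condition. For any finite $A\subseteq V(G)$, summing $\sum_{y\in N(x)} \phi(x,y)=f(x)$ over $x\in A$ collapses the contributions of edges internal to $G[A]$ (since $\phi(x,y)+\phi(y,x)=0$) and yields $f(A)=\sum_{e\in\partial A}\varepsilon_e\,\phi(e)$ for signs $\varepsilon_e\in\{\pm 1\}$. Combined with $|\phi(e)|\leq c(e)$ this gives the cut condition $|f(A)|\leq c(\partial A)$ for every finite $A\subseteq V(G)$. Crucially this condition depends only on $f$ and $c$, so the non-measurable input $\phi$ has been discarded.

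Second, using hyperfiniteness, I would write the orbit equivalence relation of $G$ as an increasing union $E_0\subseteq E_1\subseteq\cdots$ of Borel equivalence relations with finite classes. For each $E_n$-class $C$, the finite linear program of finding a real-valued function on the edges of $G$ incident to $C$, bounded by $c$ and satisfying flow conservation $f$ at each $x\in C$, is feasible by the finite Max-Flow Min-Cut theorem applied to the cut condition derived above on subsets $A\subseteq C$. A canonical feasible point (for instance the lexicographically smallest one, or the Chebyshev center) can be selected Borel-measurably from the class-indexed family of LPs using Kuratowski--Ryll-Nardzewski.

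The main obstacle is turning these feasible local LPs into a coherent measurable flow on all of $E(G)$, since values picked independently on different classes need not agree on shared boundary edges and may oscillate at a fixed edge as $n$ grows. I would handle this inductively: maintain at stage $n$ a Borel partial flow $\Psi_n$ on a Borel set $S_n$ of ``settled'' edges, with the invariant that the set of functions on $E(G)$ bounded by $c$, satisfying demand $f$ everywhere, and agreeing with $\Psi_n$ on $S_n$, is non-empty. At stage $n+1$, extend $\Psi_n$ to all edges incident to $E_{n+1}$-classes by Borel-selecting a compatible assignment from the projection of this consistent-flow polytope onto the new coordinates. Fixing a value $\Psi_n(e)=v$ on an edge $e=(x,y)$ amounts to modifying the demand $f$ at $x$ and $y$ by the committed outflow; the cut condition for this modified demand is exactly the cut condition for the original $f$ applied to sets containing or avoiding $\{x,y\}$ appropriately, so it is inherited from Step 1 and feasibility propagates. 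Since on any given edge $\Psi_n$ stabilizes from the stage it becomes settled, the pointwise limit $\psi=\lim_n\Psi_n$ is well defined on $\bigcup_n S_n=E(G)$ and is a measurable $f$-flow bounded by $c$.
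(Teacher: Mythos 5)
Your reduction to local linear programs via hyperfiniteness, and the selection of canonical feasible points on each finite cell, is exactly how the paper starts. The divergence — and the gap — is in how the local choices are stitched together.

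You correctly identify the central obstacle: per-class selections need not be mutually compatible. But your proposed fix does not resolve it. Your inductive invariant is that the polytope of global $c$-bounded $f$-flows agreeing with $\Psi_n$ on $S_n$ is nonempty, and at stage $n+1$ you want to Borel-select, for each $E_{n+1}$-class, an assignment on its newly settled edges lying in the projection of that polytope. The problem is that each class's selection lies in the projection onto \emph{its own} coordinates, but distinct classes make their selections independently (depending only on their local isomorphism type and the already-settled data nearby), and there is no reason a global flow exists agreeing with \emph{all} of them simultaneously. Already on the bi-infinite line with $f\equiv 0$ the $f$-flows are constant on each orbit, so the single-coordinate projections are whole intervals while the joint projection is a diagonal; a Kuratowski--Ryll-Nardzewski selector applied class-by-class has no reason to land on the diagonal, and nothing in the construction forces it to. Your appeal to the cut condition does not help: the modified cut inequalities at stage $n+1$ follow from the \emph{inductive hypothesis} (existence of a flow extending $\Psi_n$), not from the original cut condition alone, and the inductive hypothesis is precisely what is at risk of failing once uncountably many classes commit incompatible values at the same stage. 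In short, you have re-described the coherence problem rather than solved it.

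The paper does not try to keep the invariant exact. It makes the same kind of per-cell lexicographic choice, yielding for each $n$ a Borel function $\psi_n$ bounded by $c$ that satisfies conservation only at vertices interior to a cell of the $n$-th partition; the measure of the exceptional set tends to $0$. Since the $\psi_n$ are uniformly bounded by $c$, they admit a weakly convergent subsequence in $L^2(E(G))$, and because antisymmetry, the conservation constraints, and the capacity bound are all affine or closed convex (hence weakly closed), the weak limit is an honest measurable $f$-flow bounded by $c$ a.e. That compactness step is what replaces the unachievable exact coherence at each finite stage, and it is the missing ingredient in your proposal.
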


See \cite{bowen2021perfect} Lemma 2.4 for a proof of the nearly identical fact with fractional matchings taking the place of flows. We include the proof for flows for the sake of completeness.

\begin{proof}[Proof of Lemma \ref{meas. flow}]
Using the hyperfiniteness of $G$, take a sequence of measurable partitions $\mathcal{C}_n$ of measurable subsets $X_n\subseteq X$ into finite subsets such that $\nu(X_n)\geq 1-\frac{1}{2^n}$. 
We use this to construct a sequence of measurable functions that are  $f$-flows on elements of $\mathcal{C}_n$. For each element $F$ of $\mathcal{C}_n$ choose the 
lexicographically least integral function $\varphi_n(F)$ on the edges in $F$ which can be extended to a $f$-flow bounded by $c$.
For each $n$, take the union of all $\varphi_n(F)$ for $F\in \mathcal{C}_n$. This gives a function which is a measurable $f$-flow on the vertices that are in the interior of a cell in $\mathcal{C}_n$ for all but finitely many $n$. This sequence has a weakly convergent subsequence in $L^2(E(G))$, and the limit satisfies the conditions of the lemma.
\end{proof}

We will also use the following basic lemma about finite graphs. 

\begin{lemma}\label{subgraph}
Let $G$ be a finite, connected graph and $P \subseteq V(G)$ a subset of even size.
Then there exists a spanning subgraph $H$ of $G$ such that every vertex of $P$ has odd degree in $H$, and every vertex of  $V(G) \setminus P$ has even degree in $H$.
\end{lemma}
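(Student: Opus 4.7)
The plan is to reduce the statement to a parity argument in the $\mathbb{F}_2$-vector space of edge subsets of $G$. Since $|P|$ is even, I begin by partitioning $P$ into pairs $\{u_1,v_1\},\dots,\{u_k,v_k\}$ where $k=|P|/2$. Using connectedness of $G$, I pick, for each pair, a path $\pi_i$ in $G$ from $u_i$ to $v_i$, and let $E_i\subseteq E(G)$ denote its edge set. The candidate subgraph $H$ is then the spanning subgraph with edge set
\[
E(H) := E_1 \,\triangle\, E_2 \,\triangle\, \cdots \,\triangle\, E_k,
\]
i.e., the symmetric difference of the path edge sets.

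The verification is a straightforward parity computation. For any edge set $F\subseteq E(G)$ let $d_F:V(G)\to\mathbb{Z}$ denote the degree function of the spanning subgraph with edge set $F$. A simple inclusion-exclusion (or direct counting) at each vertex shows $d_{F\triangle F'}(x)\equiv d_F(x)+d_{F'}(x)\pmod{2}$, so by induction $d_{E(H)}(x)\equiv \sum_{i=1}^k d_{E_i}(x)\pmod{2}$. For the path $\pi_i$ we have $d_{E_i}(u_i)=d_{E_i}(v_i)=1$ while $d_{E_i}(x)\in\{0,2\}$ for every other vertex $x$, so $d_{E_i}(x)\equiv \mathbf{1}_{\{u_i,v_i\}}(x)\pmod{2}$. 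Summing,
\[
d_{E(H)}(x)\equiv \sum_{i=1}^k \mathbf{1}_{\{u_i,v_i\}}(x) \equiv \mathbf{1}_P(x) \pmod{2},
\]
since the pairs partition $P$. Thus vertices of $P$ have odd degree in $H$ and vertices outside $P$ have even degree, as required.

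There is essentially no obstacle here beyond confirming the basic $\mathbb{F}_2$-additivity of degree under symmetric difference, which is immediate. The only ingredients used are the evenness of $|P|$ (to form the pairing), the connectedness of $G$ (to produce the paths $\pi_i$), and the observation that a path has odd degree precisely at its two endpoints.
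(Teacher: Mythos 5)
Your proof is correct and is essentially the paper's argument: the paper proves the lemma by induction on $|P|$, removing one pair $\{s,t\}$ at a time and taking the symmetric difference of a path from $s$ to $t$ with the inductively obtained subgraph, whereas you simply unroll that induction into a single symmetric difference over a fixed pairing of $P$. The underlying mechanism (paths between paired vertices plus $\mathbb{F}_2$-additivity of the degree parity under symmetric difference) is identical.
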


\begin{proof}
We prove by induction on the size of $P$. If $P=\emptyset$ then $H$ can be the edgeless spanning subgraph. Else choose two different vertices $s,t \in P$. 
Let $L$ denote a path connecting $s$ and $t$. By induction, there exists a spanning subgraph $H'$ of $G$ such that the degree of the vertices in $P \setminus \{ s,t \}$ is odd, and the degree of the other vertices is even in $H'$. Let $E(H)$ be the symmetric difference of $E(L)$ and $E(H')$.
\end{proof}

\subsection{{Connected toasts}}

\hspace{1mm}

\vspace{2mm}

The following definition \cite{bowen2021perfect} refines the notion of a toast (see \cite[Definition 2.9]{gjks}, \cite[Definition 4.1]{gjks.forcing}), coined by Miller and motivated by the work of Conley and Miller \cite{conley.miller.toast}. In particular, a \textit{toast} is any tiling that satisfies Properties (1) and (2) of the definition below.

\begin{definition}\label{toast}
Given a Borel graph $G$, we say that a Borel collection $\mathcal{T}$ of finite connected subsets of $V(G)$ is a \textit{connected toast} if it 
  satisfies

\begin{enumerate}
\item{$\bigcup_{K\in \mathcal{T}}E(K)=E(G)$,} 

\item{for every pair $K, L \in \mathcal{T}$ either $(K\cup N(K)) \cap L = \emptyset$ or $K \cup N(K) \subseteq L$, or }$L \cup N(L) \subseteq K$,

\item{for every $K \in \mathcal{T}$ the induced subgraph on $K \setminus \bigcup_{K \supsetneq L \in \mathcal{T}} L$ is connected.}
\end{enumerate}

\end{definition}


{The proof of Theorem \ref{weak gardner} will use the following result on connected toasts.}

\begin{prop}\label{tiling}\cite{bowen2021perfect}
Any locally finite, one-ended hyperfinite graphing admits a connected toast a.e. 
\end{prop}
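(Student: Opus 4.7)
The strategy is to build the connected toast inductively from a Borel hyperfinite exhaustion, using a ``hull'' operation afforded by one-endedness. By hyperfiniteness, I would fix a Borel increasing sequence $F_1 \subseteq F_2 \subseteq \cdots$ of finite subequivalence relations of the orbit equivalence relation of $G$ whose union, on a conull set, is the full orbit equivalence relation; after passing to a rapidly-growing subsequence I may assume that each $F_{n+1}$-class strictly contains the $G$-neighborhood of every $F_n$-class that it meets. Fix also a Borel linear order on $V(G)$ so that all subsequent choices are Borel-uniform.

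At stage $n$, for each $F_n$-class $C$ I would define a tile $K$ in two steps. First, form $C^{\ast}$ by adding to $C$ every tile from stages $\leq n-1$ that meets it, together with its one-step $G$-neighborhood (forcing the buffer condition of Property~(2)), and then adding lexicographically least shortest paths in $G$ to make $C^{\ast}$ connected. Second, use one-endedness to form the hull $\widehat{C^{\ast}}$: since almost every orbit is locally finite, connected, and one-ended, the complement (in the orbit) of any finite connected set has exactly one infinite component, so adjoining to $C^{\ast}$ all its finite complementary components in the orbit yields a finite connected set whose complement in the orbit is infinite and connected. Set $K := \widehat{C^{\ast}}$. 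Properties~(1) and (2) are then visible from the construction: every edge of $G$ ends up inside a sufficiently large tile, and the nesting-with-buffer is built in by the first step.

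The main obstacle will be Property~(3): the induced subgraph on $K \setminus \bigcup_{L \in \mathcal{T},\, L \subsetneq K} L$ must be connected. Write $L_1,\dots,L_j$ for the maximal tiles from previous stages strictly contained in $K$; each $L_i$ is itself a hull, so $V(G) \setminus L_i$ is connected, and because $L_i \cup N(L_i) \subseteq K$, the outer vertex boundary $\partial^{\mathrm{out}} L_i$ lies entirely inside $K \setminus \bigcup_i L_i$. Given two vertices $u, v$ of this annular region, a $u$--$v$ path in the connected set $K$ that enters some $L_i$ should be reroutable around $L_i$ through $\partial^{\mathrm{out}} L_i$, using connectedness of $V(G) \setminus L_i$ together with the ``shell'' of $K$ around $L_i$ guaranteed by the subsequence choice. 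If this rerouting is not automatic, at the first step of the construction I would additionally throw in Borel-canonical paths in $K$ joining the $\partial^{\mathrm{out}} L_i$'s to force connectivity explicitly. The hardest part will be turning this rerouting into a clean inductive guarantee while keeping every choice Borel-measurable, which drives the need to both choose the subsequence $(F_n)$ aggressively and to perform the hull construction at every stage rather than only once at the end.
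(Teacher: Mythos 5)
Your high-level strategy---hyperfinite exhaustion, a ``hull'' operation using one-endedness, and an inductive construction with a Borel linear order for tie-breaking---is a sensible way to begin, and Properties (1) and (2) do follow from the buffer you build in. The gap is exactly where you flag it, in Property (3), and it is not merely a technical loose end: the rerouting argument as stated would fail.

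Your rerouting relies on the outer vertex boundary $\partial^{\mathrm{out}} L_i = N(L_i)\setminus L_i$ lying in a connected piece of the annular region $K\setminus\bigcup_j L_j$, so that any $u$--$v$ path in $K$ that enters $L_i$ can be detoured around it. But even for $\mathbb{Z}^2$ the outer vertex boundary of a finite connected hull need not be connected. For instance, for the staircase $L=\{(0,0),(1,0),(1,1),(2,1)\}\subset\mathbb{Z}^2$, which is connected with connected complement, the vertex $(-1,0)$ is isolated in $N(L)\setminus L$. This is precisely why the paper's proof of the $\mathbb{Z}^d$ case (Proposition~\ref{one-ended.zd}) invokes the Deuschel--Pisztora lemma, which gives connectivity only of the \emph{two-step} boundary $N^2(L)\setminus L$ when $L$ has no holes, and combines it with a $k$-toast for $k>2$ to guarantee these two-step shells are contained in the larger tile and avoid the other $L_j$'s. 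No bounded-radius analogue of the Deuschel--Pisztora lemma is available for arbitrary locally finite one-ended graphs, so the $\mathbb{Z}^d$ workaround does not transfer to the setting of Proposition~\ref{tiling}.

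Your fallback of ``throwing in Borel-canonical paths joining the $\partial^{\mathrm{out}} L_i$'s'' does not close the gap. The paths you need must lie in $K\setminus\bigcup_j L_j$; if that set is disconnected then by definition no such path exists inside $K$, so you would have to enlarge $K$. Enlarging $K$ may pull additional previous tiles $L_j$ strictly inside it and may also change the hull, altering the very annular region whose connectivity you are trying to enforce --- a circularity you have not controlled. In fact, Proposition~\ref{tiling} is proved in \cite{bowen2021perfect} only almost everywhere, which is a hint that resolving this connectivity requires a nontrivial measure-theoretic bookkeeping argument (e.g., iteratively merging or retiling on a null error set) that is absent from your sketch. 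As written, the proposal identifies the right difficulty but does not contain a proof that Property (3) can actually be enforced.
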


In order to find Borel flows and equidecompositions we will need connected toasts everywhere. The following lemma provides this for actions of $\mathbb{Z}^d$, which is enough for most geometric applications including circle squaring. Recently, the first author, Poulin and Zomback \cite{bowen2022one}  generalized this to every one-ended polynomial growth group using the ideas of this proof. We include the proof for $\mathbb{Z}^d$.

\begin{prop}\label{one-ended.zd}
{The Schreier graph of} any free Borel action of $\mathbb{Z}^d$, for $d\geq2$, admits a connected toast.
\end{prop}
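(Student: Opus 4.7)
My plan is to construct the connected toast explicitly from a nested hierarchy of clopen tilings of $X$, of the sort produced by Gao--Jackson's marker lemma for free Borel $\mathbb{Z}^d$-actions.

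The first step is to produce a sequence of clopen Borel tilings $\mathcal{R}_1, \mathcal{R}_2, \ldots$ of $X$, where each $T \in \mathcal{R}_n$ is a translate of a rectangular box of side length $\approx k_n$, the side lengths $k_n$ tend to infinity sufficiently rapidly, and the tilings are nested in the sense that every tile of $\mathcal{R}_{n+1}$ is a disjoint union of tiles of $\mathcal{R}_n$. This kind of nested hierarchy is a standard tool in the Gao--Jackson framework. I would then define the toast as
\[
\mathcal{T} := \{T \in \mathcal{R}_n \text{ for some } n \geq 1 : T \cup N(T) \subseteq T^{(+)}\},
\]
where $T^{(+)} \in \mathcal{R}_{n+1}$ is the unique parent tile containing $T$. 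Thus a tile is included precisely when its one-step neighborhood fits inside its parent.

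Next I would verify the three conditions of Definition~\ref{toast}. The laminar condition (2) follows quickly from nesting: given $K \in \mathcal{R}_m \cap \mathcal{T}$ and $L \in \mathcal{R}_n \cap \mathcal{T}$ with $m \leq n$, the parent chain places $K$ inside some $\tilde K \in \mathcal{R}_n$; if $\tilde K = L$ then iterating the buffer condition through levels $m, \ldots, n$ yields $K \cup N(K) \subseteq L$, while if $\tilde K \neq L$ then $N(K)$ is trapped inside $\tilde K$, which is disjoint from $L$. For the edge-covering condition (1), every edge $(x,y)$ lies inside some single $\mathcal{R}_n$-tile once $k_n$ is large enough, and by using the slack in the marker construction (for instance by shifting subtilings generically within each parent so as to avoid coincidences of boundaries) one arranges that, at some high enough level, the tile containing $(x,y)$ also satisfies the buffer condition and so lies in $\mathcal{T}$.

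The main obstacle is condition (3): the induced subgraph on $K \setminus \bigcup_{K \supsetneq L \in \mathcal{T}} L$ must be connected for every $K \in \mathcal{T}$. Here the assumption $d \geq 2$ is essential. The maximal proper sub-tiles of $\mathcal{T}$ contained in $K$ are pairwise-disjoint ``interior'' sub-rectangles of $K$: their neighborhoods lie inside their own parents and in particular inside $K$, so none of them touches the boundary of $K$. Removing such a collection from the rectangular box $K$ leaves a region that contains the full outer ``shell'' of $K$; in $\mathbb{Z}^d$ with $d \geq 2$ this shell is itself path-connected, and it is joined through the corridors between removed sub-rectangles to the rest of the complement, so the whole remainder is connected. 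In contrast, the same construction in $\mathbb{Z}$ would disconnect the segment, which is consistent with $\mathbb{Z}$ being two-ended and thus excluded from the proposition.
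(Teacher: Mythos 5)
Your construction does not yield a connected toast, and the gap is structural rather than cosmetic. You define
$\mathcal{T} = \{T \in \mathcal{R}_n : T \cup N(T) \subseteq T^{(+)}\}$ where the $\mathcal{R}_n$ are nested \emph{partitions} by boxes. The trouble is that two distinct level-$n$ boxes $K, L$ with the same parent $P \in \mathcal{R}_{n+1}$ can both satisfy the buffer criterion (both sit strictly inside $P$) while being adjacent to each other; then $N(K) \cap L \neq \emptyset$ but neither is nested in the other, so laminarity (Property~(2) of Definition~\ref{toast}) fails. Your verification of~(2) only treats $m < n$ correctly; in the $m = n$ case you write that ``$N(K)$ is trapped inside $\tilde K$,'' but there $\tilde K = K$ and $N(K) \not\subseteq K$. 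Relatedly, the edge-covering condition~(1) is not secured by your criterion: in a nested box hierarchy a constant proportion of the level-$n$ sub-boxes of each level-$(n+1)$ tile abut the parent's boundary and are thus excluded, so an edge can fail the buffer test at \emph{every} level; the ``generic shifting'' remark is precisely the hard combinatorial content of the Gao--Jackson--Krohne--Seward toast theorem and cannot be waved through, especially since one needs coverage of every orbit, not merely almost every one. Finally, the premise that the marker machinery produces strictly nested tilings by exact translates of a single box per level is itself not a standard output of that framework (one typically gets finitely many nearly-rectangular shapes with no exact divisibility relation between levels).

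What the paper does instead is to \emph{quote} the Gao--Jackson--Krohne--Seward / Marks--Unger $k$-toast theorem, which already delivers Properties~(1) and~(2) (the $k$-separation is exactly laminarity with room to spare), together with the additional guarantee that each tile has no holes. The only thing left to check is your Property~(3), and that is handled by the Deuschel--Pisztora lemma: for a finite connected hole-free $H \subset \mathbb{Z}^d$, the annulus $N^2(H) \setminus H$ is connected, so any path through a sub-tile can be rerouted around it. Your ``shell and corridors'' intuition for boxes is a special case of this, but the cited toast tiles are not boxes in general, only hole-free, so you would need the Deuschel--Pisztora argument (or a substitute) anyway. In short: the heart of your writeup re-derives exactly the part the paper gets for free by citation, and the part you treat as routine is where the real work lies.
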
 


\begin{proof}
Recall that a \textit{$k$-toast} in a Borel graph $G$ is a Borel collection $\mathcal{T}$ of finite connected subsets of $V(G)$ such that $\bigcup_{K\in \mathcal{T}}E(K)=E(G)$, and for every pair $K, L \in \mathcal{T}$ either $(N^k(K)\cup K) \cap L = \emptyset$ or $K \cup N^k(K) \subseteq L$, or $L \cup N^k(L) \subseteq K$. By the result of Gao, Jackson, Krohne, and Seward (see \cite[Theorem 5.5]{marks-unger}), any free Borel $\mathbb{Z}^d$ action admits a $k$-toast $\mathcal{T}$ such that the boundary of each $K\in\mathcal{T}$ is visible from infinity (i.e., $K$ has no holes). Note that $\mathcal{T}$ satisfies Properties (1) and (2) of Definition \ref{toast}, 
and we claim that it also satisfies Property (3) as long as $k>2.$  {To see this, we will use a result of Deuschel and Pisztora \cite[Lemma 2.1(i)]{visual.boundary} which says that if a {finite, connected} set $H\subset \mathbb{Z}^d$ has no holes, then $N^2(H)\setminus H$ is connected. {Thus,} 
since each $K\in \mathcal{T}$ has no holes, the set $N^2(K)\setminus K$ is connected, and so for every $L \in \mathcal{T}$ the induced subgraph on $L \setminus \bigcup_{L \supsetneq K \in \mathcal{T}} K$ is connected, as any path through smaller tiles $K$ can be rerouted through $N^2(K)\setminus K$.}
\end{proof}


\section{Integral Borel flows using Borel tilings}
\label{sec:borel-flows}

In this  section we give a short algorithm that allows us to convert bounded real-valued Borel flows into bounded integral Borel flows, assuming that the graph admits a connected toast. This is applicable to finding Borel equidecompositions. Namely, as shown by Marks and Unger in \cite{marks-unger}, the existence of bounded integral Borel flows can be used to prove equidecomposition results using tilings such as the Gao--Jackson tilings \cite{gao.jackson} for actions of $\mathbb{Z}^d$. In the next section we will use these ideas to prove our results on measurable equidecompositions.



We state the theorem below in the setting of Borel graphs that admit a connected toast and Borel real-valued flow, as it applies to graphs for which such connected toasts can be found. In particular, the main obstacle towards using Theorem \ref{borel} to produce Borel  equidecompositions lies with finding Borel real-valued flows. 


\begin{theorem}\label{borel}
Consider a locally finite Borel graph $G$, a bounded integral Borel function
$f: V(G) \mapsto \mathbb{Z}$ {\color{blue}} and a Borel $f$-flow $\phi$.
Assume that $G$ admits a Borel connected toast.
Then there is an integral Borel $f$-flow $\psi$ such that $|\phi-\psi| < 3$.
\end{theorem}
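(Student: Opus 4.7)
The plan is to reduce the theorem to an integer-valued correction of bounded magnitude, then build that correction layer by layer along the connected toast. I would first fix a Borel orientation of $E(G)$ and define $\psi_0 \colon E(G) \to \mathbb{Z}$ by rounding $\phi$ to the nearest integer on each oriented edge (with a Borel tie-breaking rule) and extending antisymmetrically; this gives a Borel antisymmetric integer function with $|\phi - \psi_0| \le 1/2$ everywhere. The defect $g := f - \operatorname{div}(\psi_0)$ is then a Borel integer-valued function on $V(G)$ with $|g(v)| \le \deg_G(v)/2$ at each vertex. It suffices to construct a Borel antisymmetric $\tau \colon E(G) \to \mathbb{Z}$ with $\operatorname{div}(\tau) = g$ and $|\tau| \le 2$ everywhere, for then $\psi := \psi_0 + \tau$ is a Borel integer $f$-flow with $|\phi - \psi| \le 1/2 + 2 < 3$.

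To build $\tau$, I would enumerate the tiles of the connected toast $\mathcal{T}$ in a Borel order respecting the nesting, so each tile is processed after all its proper sub-tiles. By properties (1) and (2) of Definition \ref{toast}, every edge of $G$ lies in a unique smallest tile, so the layers $U_K := E(K) \setminus \bigcup_{L \in \mathcal{T},\, L \subsetneq K} E(L)$ partition $E(G)$. I would define $\tau$ separately on each $U_K$ with the invariant that once $K$ has been processed, every vertex $v$ with $N(v) \cup \{v\} \subseteq K$ already has $\operatorname{div}(\tau)(v) = g(v)$, while the remaining vertices of $K$ carry a controlled integer residual defect to be absorbed when a larger tile is processed. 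Property (2), giving $L \cup N(L) \subseteq K$ for every proper sub-tile $L$, ensures that every vertex eventually becomes internal at some specific tile where its divergence is irrevocably fixed.

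The local step at tile $K$ relies on property (3), which makes $A_K := K \setminus \bigcup_{L \subsetneq K} L$ a finite connected subgraph. Working in a finite connected auxiliary graph built on $A_K$ together with one representative vertex per maximal proper sub-tile of $K$, I would apply Lemma \ref{subgraph} to obtain a $\{0,1\}$-valued spanning subgraph with the correct odd-degree set, adjusting the parity of the accumulated residual divergences within $K$; a complementary routing in $A_K$ then absorbs the remaining even defect, giving $|\tau|_{U_K}| \le 2$. The main obstacle is keeping this magnitude uniformly bounded over all tiles: a naive routing through $A_K$ could accumulate arbitrarily large flow values along long paths, and the crucial role of the connected toast is that property (3) localizes the correction to a parity problem solvable by Lemma \ref{subgraph} with corrections of magnitude at most $1$, while the engulfing property (2) keeps the residual defect handed up from inner tiles bounded at each level.
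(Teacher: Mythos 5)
Your approach of ``round first, then repair the divergence'' has a genuine gap at the crucial bounding step, and the gap is not fixable by the methods you describe. After rounding $\phi$ to $\psi_0$ you must find an integer-valued $\tau$ with $\operatorname{div}(\tau)=g$ and $|\tau|\le 2$. You propose to do this tile by tile, using Lemma~\ref{subgraph} to fix parities (a $\pm 1$ correction) and then ``a complementary routing in $A_K$'' to absorb the remaining even defect. But that routing is where the bound breaks: the even part of $g$ on a vertex $v$ can be as large as $\deg_G(v)/2-1$, and on a finite connected set (say $A_K$ a path of length $n$ whose endpoints carry defects $+2m$ and $-2m$ coming from high-degree neighbourhoods) the unique flow satisfying the divergence constraints inside the tile necessarily has edge values of order $2m$. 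No universal constant bounds this. You flag this concern yourself (``a naive routing could accumulate arbitrarily large flow values'') but the claim that property (3) ``localizes the correction to a parity problem'' is not right: property (3) gives you connectivity of $A_K$, which makes the parity fix via Lemma~\ref{subgraph} possible, but it does nothing to bound the \emph{magnitude} of the even-defect routing. In effect, the sub-problem ``round the flow $\phi-\psi_0$ (which has $\|\cdot\|_\infty\le 1/2$) to an integer flow with the same divergence, Borel-ly, with a uniform bound'' is just the original theorem again in a special case, so the reduction is circular rather than progress.

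The paper's proof avoids this by working in the opposite order: instead of fixing the integer part first and repairing divergence afterwards, it fixes the \emph{least significant bits} first. One lemma turns $\phi$ into a dyadic flow by pushing circuits around cycles through $L\cup(K\setminus\bigcup\mathcal{M}_k)$, with the circuit constant at step $n$ chosen below $2^{-n}$, so the total perturbation stays below $1$. A second lemma then removes one binary digit at a time: at denominator level $2^l$ it uses Lemma~\ref{subgraph} to produce an even-degree auxiliary graph, decomposes it into cycles, and adds $\pm 2^{-l}$ around them; summing the geometric series gives a perturbation below $2$. The decisive difference from your plan is that each correction round contributes an amount proportional to the current denominator, so the corrections form a convergent geometric series, whereas rounding to the nearest integer up front destroys this structure and leaves you with an unbounded integer routing problem. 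If you want to salvage your outline, you would need a genuinely new argument bounding the routing, and I do not see one; adopting the bit-by-bit strategy is the cleaner path.
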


\begin{proof}

The theorem follows from the two lemmas below.

\begin{lemma} 
Consider a locally finite Borel graph $G$, a bounded integral Borel function
$f: V(G) \mapsto \mathbb{Z}$ and a Borel $f$-flow $\phi$.
Assume that $G$ admits a Borel connected toast.
Then there is a dyadic-valued Borel $f$-flow $\psi$ such that $|\phi-\psi| < 1$.
\end{lemma}

\begin{proof}
Let $\mathcal{T}$ be a Borel connected toast and let $\mathcal{M}_1 \subset \mathcal{T}$ denote the family of minimal sets (ordered by containment),
$\mathcal{M}_2$ denote the family of minimal sets in $\mathcal{T} \setminus \mathcal{M}_1$ etc.
Obviously $\mathcal{T} = \cup_{k=1}^{\infty} \mathcal{M}_k$. We define a sequence of flows recursively.
Set $\psi_1=\phi$. Assume that $\psi_k$ is a Borel $f$-flow, $\psi_k$ is dyadic-valued on 
$\bigcup_{K \in \mathcal{M}_{k-1}} E(K)$ and $|\psi_k-\phi|<1$. Choose a set $K \in \mathcal{M}_{k+1}$.
While there exists an $L \subset K, L \in \mathcal{M}_k$ where the value of the flow is not dyadic choose a 
cycle $C$ in $L \cup (K \setminus \bigcup\mathcal{M}_k)$ such that the value of the flow on the
edges of $C$ in $L$ is not dyadic. We can find such a cycle, since $K \setminus \bigcup\mathcal{M}_k$
is connected and there is no vertex of degree one in $\bigcup\mathcal{M}_k$  in the spanning subgraph of non-dyadic edges.
Add a circuit to the flow that is constant on this (oriented) cycle and zero elsewhere in such a way that after adding this circuit at least one edge on the cycle becomes dyadic and if we are at step $n$ of the whole construction, then the constant is smaller than $\frac{1}{2^n}$.
After each step, the number of edges in $L$ where the value of the flow is dyadic increases. 
Hence the process will finish in at most $|E(K)|$ steps for every $K$. The resulting $f$-flow $\psi_{k+1}$ can be Borel and it is close enough: $|\phi-\psi_{k+1}|<1$ as the added circuits were small enough. Note that the value of the flow does not change once it becomes dyadic, 
hence the sequence $\psi_k$ will stabilize for every edge. The limit $\psi$ satisfies the conditions of the lemma.
\end{proof}

\begin{lemma}
Consider a locally finite Borel graph $G$, an integral bounded Borel function
$f: V(G) \mapsto \mathbb{Z}$ and a dyadic-valued Borel $f$-flow $\phi$.
Assume that $G$ admits a Borel connected toast.
Then there is an integral Borel $f$-flow $\psi$ such that $|\phi-\psi| < 2$.
\end{lemma}

\begin{proof}
The proof is similar to the previous one. Let $\mathcal{T}$ be a Borel connected toast and let $\mathcal{M}_1 \subseteq \mathcal{T}$ denote the family of minimal sets,
$\mathcal{M}_2$ denote the family of minimal sets in $\mathcal{T} \setminus \mathcal{M}_1$ etc. We define a sequence of flows recursively,
starting with $\psi_1=\phi$. Assume that $\psi_k$ is a Borel $f$-flow and $\psi_k$ is integral on $\bigcup_{K \in \mathcal{M}_{k-1}} E(K)$.
Consider a tile $K \in \mathcal{M}_{k+1}$. Let $2^l$ be the largest denominator of the flow value over every edge in 
$\bigcup_{L\subseteq K, L \in \mathcal{M}_k} E(L)$. If $l > 0$ then consider the set $P$ of vertices in $K \setminus \bigcup \mathcal{M}_k$
that have an odd number of neighbors in $\bigcup_{L\subseteq K, L \in \mathcal{M}_k} L$ such that the value of the flow on the corresponding edge 
has denominator $2^l$. Using Lemma~\ref{subgraph} applied to $P$ and $K \setminus \bigcup\mathcal{M}_k$, we get a subgraph $H_K$ on $K$ such that for every vertex in $K \setminus \bigcup\mathcal{M}_k$ its degree is even if and only if the vertex does not belong to $P$. Taking the union of $H_K$ with the graph
spanned by the vertices with at least one endvertex in $\bigcup_{L\subseteq K, L \in \mathcal{M}_k} V(L)$, where the denominator of the flow is equal to $2^l$, we get a graph whose vertices have all even degree. 
This graph partitions into a family of edge-disjoint cycles that covers every edge in $\bigcup_{L \subset K, L \in \mathcal{M}_k} E(L)$ with the flow value of denominator $2^l$ exactly once, and no other edge in 
$\bigcup_{L \subset K, L \in \mathcal{M}_k} E(L)$. Choose an orientation of every cycle and add $2^{-l}$ to the value of the flow on these cycles. 
Now the largest denominator of the flow values in $\bigcup_{L \subset K, L \in \mathcal{M}_k} E(L)$ is at most $2^{l-1}$.
Continue this until every edge in $\bigcup_{L \in \mathcal{M}_k} E(L)$ becomes integral.
The resulting $f$-flow $\psi_{k+1}$ can be constructed to be Borel, since the cycles and their orientations can be chosen in a Borel way. The value of the flow does 
not change once it becomes integral, hence the sequence $\psi_k$ will stabilize for every edge. Consider an edge $e \in E(K)$ and the smallest $k$
for which there exists $K \in \mathcal{M}_k$ such that $e \in E(K)$. Note that $\psi_{k+1}(e)=\psi_{k+2}(e)=\dots$, and $\phi(e)=\psi_{k-1}(e)$.
$|\psi_{k-1}(e)-\psi_k(e)|<1$, since the value of the flow at $e$ can change by less than $\sum_{l=1}^{\infty} 2^{-l}$. Similarly, $|\psi_k(e)-\psi_{k+1}(e)|<1$. Hence the limit $\psi=\lim_{n \rightarrow \infty} \psi_n$ satisfies the conditions of the lemma.
\end{proof}

\end{proof}
If we are interested in measurable rather than Borel flows (as we will be in the next section), we can use the above Theorem to deduce the following more general result on obtaining measurable integral flows from (possibly) non-measurable ones. 

\begin{cor}\label{measurable flows}
Let $G$ be a locally finite one-ended hyperfinite graphing and $f:V(G)\rightarrow \mathbb{Z}$ an integral bounded measurable function. If $G$ admits a (not necessarily measurable) real-valued bounded $f$-flow then $G$ admits an integral bounded measurable $f$-flow.
\end{cor}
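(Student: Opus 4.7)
The plan is to chain together the three preceding results: Proposition~\ref{meas. flow}, Proposition~\ref{tiling}, and Theorem~\ref{borel}. First I would use Proposition~\ref{meas. flow} to pass from the given (possibly non-measurable) bounded $f$-flow to a measurable one; then I would invoke Proposition~\ref{tiling} to obtain a Borel connected toast on a conull subset; and finally I would apply Theorem~\ref{borel} to round the measurable real-valued flow to an integral measurable flow without losing much boundedness.

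In more detail, let $\phi_0$ denote the given real-valued bounded $f$-flow with $|\phi_0|\leq M$ for some $M>0$. Taking the constant integral capacity $c\equiv \lceil M\rceil$ on $E(G)$, the flow $\phi_0$ is an $f$-flow bounded by $c$, so Proposition~\ref{meas. flow} furnishes a measurable $f$-flow $\phi$ with $|\phi|\leq \lceil M\rceil$. Since $G$ is locally finite, one-ended, and hyperfinite, Proposition~\ref{tiling} produces a Borel connected toast $\mathcal{T}$ defined on a conull Borel set. Intersecting the conull sets on which $\phi$ agrees with a Borel function and on which $\mathcal{T}$ is defined, then passing to a $G$-invariant conull Borel refinement $X_0$, the restriction of $G$ to $X_0$ is a locally finite Borel graph admitting a Borel connected toast and carrying a bounded real-valued Borel $f$-flow. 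Theorem~\ref{borel} then yields an integral Borel $f$-flow $\psi$ on $X_0$ with $|\psi-\phi|<3$, so $|\psi|<\lceil M\rceil+3$. Extending $\psi$ by $0$ off $X_0$ gives the desired bounded integral measurable $f$-flow.

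The substantive content is already packaged in Theorem~\ref{borel}; what remains is measure-theoretic bookkeeping, i.e.\ arranging that the connected toast, the real-valued flow, and an invariant support set are all chosen Borel on one common conull set. This is routine, since every measurable function on a standard probability space agrees with a Borel function off a null set, every conull measurable set contains a conull Borel subset, and any such subset can be replaced by its $G$-saturation without losing measure. Thus no genuine obstacle arises, and the corollary is essentially the combination of the measurable-flow existence (Proposition~\ref{meas. flow}), the toast existence (Proposition~\ref{tiling}), and the Borel rounding algorithm (Theorem~\ref{borel}).
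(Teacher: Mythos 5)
Your proof is correct and follows exactly the route the paper takes: apply Proposition~\ref{meas. flow} to upgrade the flow to a measurable one, Proposition~\ref{tiling} to get a connected toast a.e., and Theorem~\ref{borel} to round on a conull invariant Borel set. The paper states this chain in three sentences without spelling out the conull-set bookkeeping, which you have correctly filled in.
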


\begin{proof}
Such graphings admit connected toasts a.e. by Proposition \ref{tiling}. Moreover, by Proposition \ref{meas. flow} the existence of a (not necessarily measurable) bounded real valued $f$-flow implies the existence of a measurable bounded real valued $f$-flow.  These facts together with Theorem \ref{borel} give the desired result.
\end{proof}

In particular, the above result applies to the Schreier graph of any free pmp action of a finitely generated amenable group that is not virtually $\mathbb{Z}.$

\section{The uniform Gardner conjecture and the infinite dihedral group}
\label{sec:equidecompositions}

In this section we prove Theorem \ref{weak gardner}.  As mentioned in the introduction, the positive part of our result will follow from Theorem \ref{measurable flows} and ideas from Marks and Unger's Borel circle squaring proof \cite{marks-unger}.  In the negative direction, the novelty is the identification of $\zz$ as essentially the only group that does not satisfy this strengthening of the Gardner conjecture. 


 
We now show that the only groups without the uniform Gardner property are those groups which admit $\zz$ as a finite quotient. Recall that an action of an infinite group is {\it totally ergodic} if every element of infinite order acts ergodically.

\begin{lemma}\label{totally.ergodic}
Let $\zz\curvearrowright X$ be a free pmp totally ergodic action. There is
no measurable choice of an end $e:X\to\e(\zz)$ which is invariant
under the action.
\end{lemma}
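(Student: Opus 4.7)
The plan is to exploit the fact that $\zz$ has a very simple action on its two-element set of ends: the index-$2$ subgroup generated by the product of the two standard involutions acts trivially, while every element outside this subgroup swaps the two ends. This rigid dichotomy, combined with total ergodicity, should force any measurable invariant end selection to be both constant and non-constant.

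In more detail, let $a, b$ denote the two generators of order $2$, so that $g := ab$ has infinite order and $\langle g\rangle \cong \Z$ is a normal subgroup of index $2$ in $\zz$. Geometrically, the Cayley graph of $\zz$ (with respect to $\{a,b\}$) is a bi-infinite line, $\langle g\rangle$ translates this line and therefore fixes each of the two ends, while $a$ (and $b$) reverses the line and hence swaps the two ends. Thus the natural action of $\zz$ on $\mathrm{Ends}(\zz) = \{e_1, e_2\}$ factors through the quotient $\zz/\langle g\rangle \cong \Z/2\Z$, with the nontrivial coset acting by the flip.

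Now suppose toward a contradiction that $e \colon X \to \mathrm{Ends}(\zz)$ is a measurable $\zz$-equivariant map. Equivariance under $g = ab$ gives $e(g\cdot x) = g\cdot e(x) = e(x)$ for almost every $x$, so $e$ is $\langle g\rangle$-invariant. Since the action is totally ergodic and $g$ has infinite order, $g$ acts ergodically on $(X,\mu)$, which forces $e$ to be constant on a conull set $X_0 \subseteq X$; say $e \equiv e_1$ on $X_0$. Replacing $X_0$ by $\bigcap_{\gamma \in \zz} \gamma \cdot X_0$ (a conull $\zz$-invariant subset, since $\zz$ is countable), we may assume $X_0$ is $\zz$-invariant. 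Pick any $x \in X_0$ and consider $a \cdot x \in X_0$: equivariance gives $e(a\cdot x) = a \cdot e(x) = a \cdot e_1 = e_2$, but $a\cdot x \in X_0$ forces $e(a\cdot x) = e_1$, a contradiction.

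The only potential subtlety is ensuring that the equivariance identity holds simultaneously at $x$ and at $a\cdot x$ on a common conull set; this is immediate because there are only countably many group elements and we can intersect the conull sets on which each equivariance identity holds. So no genuine obstacle arises, and the argument is essentially a one-line consequence of the structure of the action on $\mathrm{Ends}(\zz)$ together with total ergodicity of $ab$.
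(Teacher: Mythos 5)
Your proof is correct and is essentially the same argument as the paper's, just phrased in terms of the map $e$ being almost-everywhere constant rather than in terms of the measures of the level sets $A=e^{-1}(e_1)$, $B=e^{-1}(e_2)$. The paper derives $\alpha A=B$, $\beta B=A$, hence $\alpha\beta$-invariance of $A$ and $B$, then gets the contradiction from $\mu(A)\in\{0,1\}$ together with $\mu(A)=\mu(B)$ and $\mu(A)+\mu(B)=1$; you express exactly the same chain of facts by saying $e$ is $\langle\alpha\beta\rangle$-invariant (hence a.e.\ constant) yet must take both values because $\alpha$ flips the ends.

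One small point worth flagging: the statement says $e$ is ``invariant under the action,'' which you correctly unpack as equivariance $e(\gamma x)=\gamma\cdot e(x)$ for the natural action of $\zz$ on $\mathrm{Ends}(\zz)$ (where $\alpha\beta$ acts trivially and $\alpha,\beta$ swap the two ends). This is the right reading — a consistent selection of one end of each orbit's Schreier graph, expressed via base-point identification — and it is what the paper's proof also uses implicitly when it derives $\alpha A=B$ and $\beta B=A$. Under the literal reading $e(\gamma x)=e(x)$ the lemma would be false (a constant map would work), so your interpretation is the only sensible one and matches the paper.
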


\begin{proof}
Suppose for a contradiction that $e:X\to\e(\zz)$ is a measurable invariant
choice of end. Write $\alpha,\beta$ for the generators of
$\zz=\langle \alpha,\beta|\alpha^2=\beta^2=1\rangle$. Then for every $x\in
X$, the function $e$ induces a choice of one of the
generators $\alpha,\beta$ pointing from $x$ in the direction of
$e(x)$. Write $A\subseteq X$ for the set of points which
choose $\alpha$ and $B\subseteq X$ for the set of points which
choose $\beta$. Note that $A$ and $B$ are measurable since $e$
was measurable. Also, they alternate on every orbit,
namely $\alpha A=B$ and $\beta B=A$. Then both $A$ and $B$ are
invariant under $\alpha \beta$. However, since every element of
infinite order acts ergodically, the sets $A$
and $B$ should have measure one or zero, which is
impossible, since $\alpha$ and $\beta$ are measure-preserving.
\end{proof}

\begin{lemma}\label{zz gard}
   Suppose that $\Gamma$ admits $\zz$ as a quotient by a finite normal subgroup and $\Gamma\curvearrowright X$ is a free pmp totally ergodic action. Then there are two measurable $\Gamma$-uniform sets of the same measure
  in $X$ which are $\Gamma$-equidecomposable but not measurably $\Gamma$-equidecomposable.
\end{lemma}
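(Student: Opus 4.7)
My plan is to translate measurable equidecomposability into a question about measurable integer flows on the Schreier graph, extract a $\mathbb{Z}/2$ parity obstruction tied to the two-ended nature of $\zz$, and construct explicit measurable sets that realise this obstruction.

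First I would reduce to the case $\Gamma = \zz$. Since $N \lhd \Gamma$ is finite with $\Gamma/N \cong \zz$, a measurable transversal $T \subseteq X$ to the free $N$-action inherits a free pmp totally ergodic $\zz$-action, and any pair of $\zz$-uniform, non-measurably $\zz$-equidecomposable sets in $T$ can be saturated under $N$ to a pair in $X$ with the analogous $\Gamma$-properties.

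Writing $\zz = \langle \alpha, \beta \mid \alpha^2 = \beta^2 = 1 \rangle$ and $\sigma := \alpha\beta$, I would use the correspondence (in the vein of Laczkovich and Marks--Unger) between $\zz$-equidecomposability of measurable $A, B$ and bounded integer flows $\phi$ on the bi-infinite-path Schreier graph with divergence $\mathbf{1}_A - \mathbf{1}_B$. Encoding such a flow by $\Phi_\alpha(x) := \phi(x, \alpha x)$, measurable equidecomposability amounts to a measurable integer function $\Phi_\alpha$ satisfying the antisymmetry $\Phi_\alpha(\alpha x) = -\Phi_\alpha(x)$ together with the $\sigma$-cohomological equation
\[
\Phi_\alpha(\sigma x) - \Phi_\alpha(x) = -f(x) - f(\beta x), \qquad f := \mathbf{1}_A - \mathbf{1}_B.
\]
Since $\tilde{f}(x) := -f(x) - f(\beta x)$ is automatically $\beta$-invariant (using $\beta^2 = 1$), for any measurable solution $\Phi^0$ the quantity $\Phi^0(\alpha x) + \Phi^0(x)$ is $\sigma$-invariant and hence almost surely an integer constant $c_0$ by total ergodicity. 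Arguing as in Lemma \ref{totally.ergodic}, the two $\sigma$-orbits inside each $\Gamma$-orbit cannot be measurably distinguished, so the shift constants added to $\Phi^0$ on the two $\sigma$-orbits to produce a measurable $\Phi_\alpha$ satisfying the antisymmetry must coincide, forcing $c_0$ to be even.

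To realize $c_0$ odd I would choose measurable fundamental domains $D$ and $E$ for the free involutions $\alpha$ and $\beta$ and set $g := \mathbf{1}_{\alpha D}$, so that $g(\alpha x) + g(x) \equiv 1$ and $\Phi^0 := -g$ gives $c_0 = -1$. The function $h := g \circ \sigma - g$ is $\beta$-invariant, so defining $f := h$ on $E$ and $f := 0$ on $\beta E$ produces a measurable $f \in \{-1, 0, 1\}$ with $f + f \circ \beta = h$. Setting $A := f^{-1}(1)$ and $B := f^{-1}(-1)$, one has $\mu(A) = \mu(B)$ (since $\int f = 0$), and the non-measurable existence of $\Phi^0$ with the antisymmetry---obtained by freely choosing integer shifts on the two $\sigma$-orbits summing to $-c_0$---yields a non-measurable equidecomposition, while the parity obstruction rules out any measurable one.

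The main obstacle is $\Gamma$-uniformity: since $A, B \subseteq E$, they may have very low density in some orbits. I would fix this by replacing them by $A' := X \setminus B$ and $B' := X \setminus A$, which have the same flow-divergence $\mathbf{1}_A - \mathbf{1}_B$ (so measurable equidecomposability of $A', B'$ is equivalent to that of $A, B$) and have density bounded below by roughly $1/2$ in every sufficiently Følner set, since $E$ is a $\beta$-fundamental domain and so $|E \cap F|/|F| \approx 1/2$. This yields the required measurable $\Gamma$-uniform counterexample.
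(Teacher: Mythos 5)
Your proposal is correct in substance but reaches the conclusion by a genuinely different mechanism than the paper. The paper works in $X' = X/\Delta$ by taking $A'$ a maximal independent set in the Schreier line and $B'$ a set choosing one point in each gap, then argues: a measurable equidecomposition would give a measurable bounded integer flow, hence a measurable perfect matching between $A'$ and $B'$, hence a measurable $\zz$-invariant choice of end, contradicting Lemma~\ref{totally.ergodic}. You instead phrase everything through the cocycle $\Phi_\alpha$ and extract a numerical parity invariant $c_0 = \Phi^0(\alpha x) + \Phi^0(x)$, which is $\sigma$-invariant and hence a.e.\ constant, and which must be even for a measurable antisymmetric solution to exist while your explicit $\Phi^0 = -\mathbf 1_{\alpha D}$ forces $c_0 = -1$. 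That is a cohomological obstruction rather than an ends obstruction; both ultimately rest on ergodicity of $\sigma = \alpha\beta$.

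Two remarks on the write-up. First, the step ``the shift constants on the two $\sigma$-orbits must coincide'' is stated more opaquely than it needs to be; the clean version is: any two measurable integer-valued solutions of the cohomological equation differ by a measurable $\sigma$-invariant integer function, which is a.e.\ constant by ergodicity of $\sigma$, so a measurable antisymmetric solution would force $c_0 + 2c = 0$ for an integer $c$, impossible when $c_0$ is odd. Second, you should note explicitly that $A, B$ are non-null (this follows since $h = g\circ\sigma - g$ cannot vanish a.e.\ by ergodicity of $\sigma$ applied to $\alpha D$, and $h$ is $\beta$-invariant so $A = h^{-1}(1)\cap E$ inherits positive measure). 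The complementing trick $A' := X\setminus B$, $B' := X\setminus A$ to achieve uniformity is a nice move that the paper handles differently (it builds uniformity into the choice of $A'$, $B'$ directly). Both your proof and the paper's leave the reduction from $\Gamma$ to $\zz$ (lifting the counterexample through the finite normal subgroup while preserving uniformity and the failure of measurable equidecomposability) somewhat compressed; in each case that step deserves a sentence or two verifying that a measurable $\Gamma$-equidecomposition in $X$ descends to a measurable $\zz$-equidecomposition (or integer flow) in $X/\Delta$.
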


\begin{proof}
Let $\Delta\lhd\Gamma$ be finite such that $\Gamma\slash\Delta=\zz$. Write $X'=X\slash\Delta$. Let $G$ be the Schreier graph of the action of $\Gamma$ on $X$ and let $G'$ be the Schreier graph of the action of $\zz=\langle \alpha,\beta|\alpha^2=\beta^2=1\rangle$ on $X'$. 
Let $A'\subseteq X'$ be a maximal Borel set which is an independent set in $G'$. Note that on each orbit of $\zz$, the set $A'$ has gaps of size either $1$ or $2$. Find a  Borel set $B'$ that picks exactly one point in each of the gaps left by $A'$.

Let $A\subseteq X$ be a Borel set meeting each $\Delta a'$ in exactly one point for every $a'\in A'$, and let $B\subseteq X$ be a Borel set meeting each $\Delta b'$ in exactly one point for every $b'\in B$. Note that the sets $A$ and $B$ have the same positive measure and are $\Gamma$-uniform.

To see that $A$ and $B$ are not measurably $\Gamma$-equidecomposable, we first argue about $A'$ and $B'$.

\begin{claim}
$A'$ and $B'$ are not measurably $\zz$-equidecomposable.
\end{claim}

\begin{proof}
Indeed, suppose that such a measurable $\zz$-equide\-composition exists. Then using such an equidecomposition, we can form a measurable bounded integer $G'$-flow $\psi$ for the function $\chi_{A'}-\chi_{B'}$. Such a flow induces a measurable perfect matching between $A'$ and $B'$ by matching $x\in A'$ to the point in $B'$ that is on the side of the edge $(x,y)$ adjacent to $x$ for which $\psi(x,y)>0$. Now, a measurable perfect matching between $A'$ and $B'$ leads to a measurable invariant choice of an end for this action. Namely, the matching restricted to every
orbit separately has to match every point $x\in A'$ to the point $y\in B'$ such that the direction from $x$ to $y$ determines one of the ends. This choice of an end does not depend on $x\in A'$, so it is invariant under the action. As we assume that the action is totally ergodic, we get a contradiction with Lemma \ref{totally.ergodic}.
\end{proof}

Now, the above claim gives that there is no $\Gamma$-equidecomposition between $A$ and $B$ because such an equidecomposition would lead to a measurable
$\zz$-equidecomposition between $A'$ and $B'$.
\end{proof}

\begin{cor}\label{zz gardner cor}
  If $\Gamma$ admits $\zz$ as a quotient by a finite normal
  subgroup then $\Gamma$ does not have the uniform Gardner property.
\end{cor}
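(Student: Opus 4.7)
The corollary is essentially a packaging of Lemma \ref{zz gard}: all we need to do is exhibit a single free pmp totally ergodic action of $\Gamma$ to which the lemma can be applied, and then read off the conclusion.

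The plan is therefore first to produce such an action. Since $\Gamma$ is countable and infinite (it surjects onto the infinite group $\zz$ modulo a finite subgroup), the Bernoulli shift $\Gamma \acts ([0,1]^\Gamma,\lambda^\Gamma)$ is free almost everywhere (one can restrict to the conull invariant set of free points) and strongly mixing. Strong mixing implies that every non-identity element, in particular every element of infinite order, acts ergodically, so the shift is totally ergodic in the sense used in Lemma \ref{totally.ergodic}.

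Next, I apply Lemma \ref{zz gard} to this action. This yields two measurable $\Gamma$-uniform subsets $A, B$ of the underlying space which have equal positive measure, are $\Gamma$-equidecomposable, yet admit no measurable $\Gamma$-equidecomposition. By the definition of the uniform Gardner property, the existence of such a pair $(A,B)$ for a free pmp action of $\Gamma$ is exactly the negation of the property. Hence $\Gamma$ does not have the uniform Gardner property.

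There is essentially no obstacle, since the heavy lifting (constructing $A,B$ from a Borel independent set in the quotient Schreier graph of $\zz$, and ruling out a measurable equidecomposition via the absence of a measurable invariant choice of end) is done in Lemma \ref{zz gard} and Lemma \ref{totally.ergodic}. The only small check worth spelling out is that a free pmp totally ergodic action of $\Gamma$ genuinely exists; this is handled uniformly by the Bernoulli shift argument above, and requires no amenability or other structural assumption beyond $\Gamma$ being countably infinite.
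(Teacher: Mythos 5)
Your proof is correct and takes essentially the same route as the paper: the paper's one-line proof simply invokes Lemma \ref{zz gard} after "considering a totally ergodic action of $\Gamma$," and you have merely made explicit that the Bernoulli shift provides such an action (mixing implies every infinite-order element acts ergodically), which is a standard but worthwhile detail to spell out.
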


\begin{proof}
This follows directly from  Lemma \ref{zz gard} by considering a totally ergodic action of $\Gamma$.
\end{proof}

Now we deal with groups which have the uniform Gardner property.

\begin{prop}\label{one-ended.wg}
  Let $\Gamma$ be an amenable group. If $\Gamma$ is one-ended or has a finite normal subgroup $\Delta\lhd \Gamma$ with $\Gamma\slash\Delta=\mathbb{Z}$, then it
  has the uniform Gardner property.
\end{prop}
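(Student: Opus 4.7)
The plan is to mimic the flow-based approach of Marks and Unger \cite{marks-unger}, using the toast-based rounding machinery developed in Section \ref{sec:borel-flows} to extract a bounded integral Borel flow from the given equidecomposition, and then turning this integer flow back into a measurable equidecomposition.

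First, given the $\Gamma$-equidecomposition $A=\bigsqcup_{i=1}^{k}\gamma_i B_i$, I would produce a bounded (but a priori non-measurable) real-valued $(\chi_A-\chi_B)$-flow $\phi_0$ on the Schreier graph $G$ of $\Gamma$ with respect to a fixed finite generating set $S$, by routing one unit of flow from each $x\in B_i$ to $\gamma_i x$ along a fixed geodesic. Since the $\gamma_i$ lie in a fixed finite set, the contribution to any edge of $G$ is uniformly bounded by a constant depending only on $\max_i|\gamma_i|_S$ and the degree of $G$. This step does not require the $\Gamma$-uniformity of $A$ or $B$. Next, I would apply Proposition \ref{meas. flow} to $\phi_0$ to obtain a measurable bounded real-valued $(\chi_A-\chi_B)$-flow $\phi$.

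In the third step I would round $\phi$ to a bounded integral measurable $(\chi_A-\chi_B)$-flow $\psi$. When $\Gamma$ is one-ended and amenable, its free pmp Schreier graph $G$ is locally finite, one-ended, and hyperfinite, so Corollary \ref{measurable flows} (combining Proposition \ref{tiling} and Theorem \ref{borel}) gives the required flow directly. When $\Gamma/\Delta=\mathbb{Z}$, the orbits of $G$ are finite-width strips over $\mathbb{Z}$; I would handle this case by a direct argument, passing to the $\mathbb{Z}$-subgroup of finite index obtained from the splitting $\Gamma=\Delta\rtimes\mathbb{Z}$ (which exists since $\mathbb{Z}$ is free) and rounding by taking floors on each line-like $\mathbb{Z}$-orbit, then absorbing the integer discrepancies inside the finite $\Delta$-fibers.

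Finally, I would convert $\psi$ into a measurable $\Gamma$-equidecomposition by following the recipe of Marks and Unger: the bounded integer flow decomposes in a Borel way into paths from $B$ to $A$ together with cycles that can be discarded, and these paths produce a measurable matching. The $\Gamma$-uniformity of $A$ and $B$ is essential here: it guarantees that the paths in this decomposition can be chosen of uniformly bounded length, so that only finitely many elements of $\Gamma$ appear as ``displacements'' in the resulting equidecomposition.

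I expect the main obstacle to be this final step: producing a Borel path decomposition whose paths have uniformly bounded length requires a Borel augmenting-path argument in the spirit of \cite{marks-unger}, and it is here that the uniformity hypothesis genuinely intervenes. The rounding in the virtually $\mathbb{Z}$ case, although not directly subsumed by Theorem \ref{borel}, should be routine given the line-like structure of the orbits.
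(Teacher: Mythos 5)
Your first three steps agree with the paper: build a bounded (non-measurable) $(\chi_A-\chi_B)$-flow from the given equidecomposition, make it measurable via Proposition~\ref{meas. flow}, and round it to a bounded integral measurable flow $\psi$ using Corollary~\ref{measurable flows} in the one-ended case and a direct argument on the strip-like orbits when $\Gamma/\Delta\cong\mathbb{Z}$ (the paper handles the latter by observing that such actions admit a measurable choice of end, but your finite-index $\mathbb{Z}$-subgroup route is a reasonable alternative).

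Where you genuinely diverge --- and where you yourself locate the difficulty --- is the last step, converting $\psi$ into a measurable equidecomposition. Your plan to decompose the bounded integer flow into $B$-to-$A$ paths and to argue that $\Gamma$-uniformity yields a decomposition into paths of uniformly bounded length does not come for free: that bounded-length path decomposition is the technical core of the Marks--Unger argument, requiring an iterated Borel augmenting-path scheme organized along a toast, and you would need to reprove all of it in this setting. The paper avoids this entirely. It invokes the exact F\o lner tiling theorem of Conley, Jackson, Kerr, Marks, Seward and Tucker-Drob \cite{folner.tilings} to tile $X$ measurably by $\frac{c}{4}$-F\o lner tiles (with $c$ the uniformity constant), then aggregates the bounded integral flow $\psi$ across tile boundaries. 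Since $\psi$ is bounded, the net flow across the boundary of each tile $T$ is smaller than $c|T|$, while $\Gamma$-uniformity gives $|A\cap T|,|B\cap T|\geq c|T|$; so one can measurably move surplus points of $A$ and $B$ between adjacent tiles as dictated by the aggregated flow, after which every tile contains equal numbers of $A$- and $B$-points, and any measurable bijection within each tile completes the equidecomposition. This makes the role of uniformity transparent, handles both cases of the proposition at once, and dispenses with the augmenting-path machinery you identified as the obstacle.
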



\begin{proof}
Suppose $\Gamma\curvearrowright X$ is a free Borel action and $A,B\subseteq X$ are measurable,
$\Gamma$-uniform and equidecomposable. Let $c$ be a constant witnessing that $A$ and $B$ are $\Gamma$-uniform. 
  
Note that if $\Gamma\slash\Delta=\mathbb{Z}$ for a finite normal subgroup $\Delta\lhd\Gamma$, then any free action $\Gamma\curvearrowright X$ admits a measurable choice of an end in any orbit. Since $A$ and $B$ are equidecomposable, there exists an integral $(\chi_A-\chi_B)$-flow bounded by $1$. We can also get a measurable, integral $(\chi_A-\chi_B)$-flow $\psi$ bounded by $4$ in the one-ended case by Corollary \ref{measurable flows}.
  
By the result of Conley, Jackson, Kerr, Marks, Seward and Tucker-Drob \cite{folner.tilings}, we can choose
a tiling $\mathcal{T}$ of the space with
$\frac{c}{4}$-F\o lner sets. Since $A$ and $B$ are $\Gamma$-uniform, we have
$|T\cap A|,|T\cap B|\geq c|T|$ for every tile $T\in\mathcal{T}$. 
  
For every pair of tiles $T,S\in\mathcal{T}$ which are adjacent put $\psi(S,T)$ to be the sum of all $\psi(e)$ for edges $e$ starting at a point in $S$ and ending at a point in $T$. Note that for each tile $T\in\mathcal{T}$ we have that $|A\cap T|,|B\cap T|$ are bigger than the sum of $\psi(T,S)$ for all tiles $S$ adjacent to $T$. Therefore, we can move points in a measurable way from $A$ and $B$ between the tiles as indicated by $\psi$, so after that in each tile there will be the same number of points from $A$ and $B$. Any measurable choice of bijections in the tiles gives a measurable equidecomposition.
\end{proof}
Finally, we prove Theorem \ref{weak gardner}.

\begin{proof}[Proof of Theorem \ref{weak gardner}]
  Any infinite amenable group has one or two ends (see for instance \cite[Th\'{e}or\`{e}me 10 and Chapitre V]{poenaru}). First
  note that in the one-ended case $\Gamma$ does not admit
  $\zz$ as a quotient by a finite subgroup and it satisfies
  the uniform Gardner property by Proposition \ref{one-ended.wg}.

  Thus, we only focus on the two-ended case. If $\Gamma$ has
  two ends then by \cite[Theorem 5.12]{scott.wall} it has a finite normal subgroup
  $\Delta$ such that $\Gamma\slash\Delta$ is either
  isomorphic to $\mathbb{Z}$ or to $\zz$. If $\Gamma\slash\Delta=\mathbb{Z}$, then we are done again by Proposition \ref{one-ended.wg}. In the second case, Corollary \ref{zz gardner cor} implies the theorem.
\end{proof}

\bibliographystyle{amsalpha} 
\bibliography{1}

\end{document}